\newcounter{count}
\newcounter{countrem}
\newtheorem{theorem}[count]{Theorem}
\newtheorem{proposition}[count]{Proposition}
\newtheorem{assumption}[count]{Assumption}
\renewcommand{\thefigure}{\ifnum \c@section>\z@ \thesection.\fi
 \@arabic\c@figure}
\renewenvironment{proof}[1][\proofname]{\par
  \pushQED{\qed}%
  \normalfont \topsep6\p@\@plus6\p@\relax
  \trivlist
  \item[\hskip\labelsep
        \bfseries
    #1\@addpunct{\scantokens{:}}]\ignorespaces
}{%
  \popQED\endtrivlist\@endpefalse
}
\title{A Probabilistic Look at Conservative Growth-Fragmentation Equations}
\author{Florian \textsc{Bouguet}}
\date{
    \emph{Inria Nancy -- Grand Est, Team BIGS}\\
    \emph{Institut \'Elie Cartan de Lorraine}\\[2ex]
    October 20, 2016
}
			\let\runauthor\@author
			\let\runtitle\@title
\newenvironment{remark}[1][]{\par\noindent\refstepcounter{countrem}\textbf{Remark}
\ifx\newenvironment#1\newenvironment
	\textbf{\arabic{countrem}.}
\else
	\textbf{\arabic{countrem}} (#1)\textbf{.}
\fi}{\leavevmode\unskip\penalty9999 \hbox{}\nobreak\hfill\quad\hbox{{$\diamondsuit$}}}
\newenvironment{acknowledgements}{\noindent\textbf{Acknowledgements:}}{}
\definecolor{darkred}{rgb}{0.9,0.1,0.1}
\newcommand*{\e}{\text{e}}
\newcommand*{\E}{\mathbb{E}}
\newcommand*{\Leb}{\mathbb{L}}
\newcommand*{\N}{\mathbb{N}}
\renewcommand*{\P}{\mathbb{P}}
\newcommand*{\R}{\mathbb{R}}
\newcommand*{\eps}{\varepsilon}
\newcommand*{\indic}{\mathds{1}}
\begin{document}

\pagestyle{main}
\maketitle
\begin{center}
\begin{minipage}[c]{.8\textwidth}
\textbf{Abstract:} In this note, we consider general growth-fragmentation equations from a probabilistic point of view. Using Foster-Lyapunov techniques, we study the recurrence of the associated Markov process depending on the growth and fragmentation rates. We prove the existence and uniqueness of its stationary distribution, and we are able to derive precise bounds for its tails in the neighborhoods of both $0$ and $+\infty$. This study is systematically compared to the results obtained so far in the literature for this class of integro-differential equations.


\vspace{1cm}

\noindent\textbf{Keywords:} growth-fragmentation, Markov process, stationary measure, tails of distribution, Foster-Lyapunov criterion.

\noindent\textbf{MSC 2010:} Primary 60J25, 60B10; Secondary 45K05, 92D25.
\end{minipage}
\end{center}

\section{Introduction}
\label{sec:Intro}

In this work, we consider the growth and fragmentation of a population of microorganisms (typically, bacteria or cells) through a structured quantity $x$ which rules the division. For instance, one can consider $x$ to be the size of a bacterium. The bacteria grow and, from time to time, split into two daughters. This behavior leads to an integro-differential equation, which can also model numerous phenomena involving fragmentation, like polymerization, network congestions or neurosciences. In the context of a dividing population, we refer to \cite[Chapter~4]{Per07} for background and biological motivations, and to \cite{Mic06,DG10} for motivations in determining the eigenelements of the equation, which correspond to the Malthusian parameter of the population (see \cite{Per07}). Regardless, if we denote by $u(t,x)$ the concentration of individuals of size $x$ at time $t$, such dynamics lead to the following \emph{growth-fragmentation equation}\index{growth-fragmentation}:
\begin{equation}
\partial_tu(t,x)+\partial_x[\tau(x)u(t,x)]+\beta(x)u(t,x)
=2\int_x^\infty\beta(y)\kappa(x,y)u(t,y)dy,
\label{eq:GrowthFrag}
\end{equation}
for $x,t>0,$ where $\tau$ and $\beta$ are the respective growth rate and fragmentation rate of the population, and $\kappa$ is the fragmentation kernel (here we adopt the notation of \cite{CDG12}). Not that because of the factor 2 in the right-hand side of \eqref{eq:GrowthFrag}, the mass of the total particle system increases with time, so that this equation is not conservative.

The evolution of this population, or rather its probabilistic counterpart, has also been widely studied for particular growth and division rates. In the context of network congestions, this is known as the TCP window size process, which received a lot of attention recently (see \cite{LVL08,CMP10,BCGMZ13,ABGKZ14}). Let us provide the probabilistic interpretation of this mechanism. Consider a bacterium of size $X$, which grows at rate $\tau$ and randomly splits at rate $\beta$ following a kernel $\kappa$, as before. We shall denote by $Q(x,dy):=x\kappa(yx,x)dy$ to deal with the relative size of the daughters compared to the mother's, so that
\[\int_0^xf(y)\kappa(y,x)dy=\int_0^1f(xy)Q(x,dy).\]
We shall naturally assume that, for any $x>0$,
\[\int_0^x\kappa(y,x)dy=\int_0^1Q(x,dy)=1.\]
If we dismiss one of the two daughters and carry on the study only with the other one, the growth and fragmentation of the population can also be modeled by a \emph{Piecewise deterministic Markov process} (PDMP) $(X_t)_{t\geq0}$ with c\`{a}dl\`{a}g trajectories a.s. The dynamics of $X$ are ruled by its infinitesimal generator, defined for any function $f$ in its domain $\mathcal D(\mathcal L)$:
\begin{equation}
\mathcal Lf(x):=\tau(x)f'(x)+\beta(x)\int_0^1[f(xy)-f(x)]Q(x,dy).
\label{eq:genGrowthFrag}
\end{equation}
We shall call $X$ a \emph{cell process} (not to be confused with a growth-fragmentation process, see Remark~\ref{rk:nomenclature}). It is a Feller process, and we denote by $(P_t)_{t\geq0}$ its semigroup (for reminders about Feller processes or PDMPs, see \cite{EK86,Dav93}). If we denote by $\mu_t=\mathscr L(X_t)$ the probability law of $X_t$, the Kolmogorov's forward equation $\partial_t(P_tf)=\mathcal L P_tf$ is the weak formulation of
\begin{equation}
\partial_t\mu_t=\mathcal L'\mu_t,
\label{eq:GrowthFragCons2}
\end{equation}
where $\mathcal L'$ is the adjoint operator of $\mathcal L$ in $L^2(\Leb)$ where $\Leb$ stands for the Lebesgue measure. Now, if $\mu_t$ admits a density $u(t,\cdot)$ with respect to $\Leb$, then \eqref{eq:GrowthFragCons2} writes
\begin{equation}
\partial_tu(t,x)=\mathcal L'u(t,x)=-\partial_x[\tau(x)u(t,x)]-\beta(x)u(t,x)
+\int_x^\infty\beta(y)\kappa(x,y)u(t,y)dy.
\label{eq:GrowthFragCons}
\end{equation}
Note that \eqref{eq:GrowthFragCons} is the conservative version of \eqref{eq:GrowthFrag}, since for any $t\geq0$, $\int_0^\infty u(t,x)dx=1$, which comes from the fact that there is only one bacterium at a time.

\begin{remark}[Link with biology]
\label{rk:biology}
Working with the probabilistic version of the problem allows us not to require the absolute continuity of $\mu_t$ nor $Q(x,\cdot)$. This is useful since many biological models set $Q(x,\cdot)=\delta_{1/2}$ (equal mitosis) or $Q(x,\cdot)=\mathscr U([0,1])$ (uniform mitosis). Note that biological models usually assume that $\int_0^1y Q(x,dy)=1/2$, so that the mass of the mother is conserved after the fragmentation, which is automatically satisfied for a symmetric division, but we do not require this hypothesis in our study. We stress that it is possible to study both daughters with a structure of random tree, as in \cite{BDMT11,Ber15,DHKR15}, the latter also drawing a bridge between the stochastic and deterministic viewpoints.
\end{remark}

In the articles \cite{DG10,CDG12,BCG13}, the authors investigate the behavior of the first eigenvalue and eigenfunction of \eqref{eq:GrowthFrag}, with a focus on the dependence on the growth rate $\tau$ and the division rate $\beta$. Although it has been previously done for specific rates (e.g. \cite{GvWW07}), they work in the setting of general functions $\tau$ and $\beta$. The aim of the present paper is to provide a probabilistic counterpart to the aforementioned articles, by studying the Markov process $(X_t)_{t\geq0}$ generated by \eqref{eq:genGrowthFrag}, and to explain the assumptions for the well-posedness of the problem. We provide a probabilistic justification to the links between the growth and fragmentation rates, with the help of the renowned Foster-Lyapunov criterion. We shall also study the tails of distribution of the stationary measure of the process when it exists. We will see that, although the assumptions are similar, there is a difference between the tails of the stationary distribution in the conservative case and in the non-conservative case.

\begin{remark}[Cell processes and growth-fragmentation processes]
\label{rk:nomenclature}
The name cell process comes from the paper \cite{Ber15}, where the author provides a general construction for the so-called growth-fragmentation processes, with the structure of branching processes. This construction allows to study the family of all bacteria (or cells) alive at time $t$. Let us stress that, in \cite{Ber15}, the process is allowed to divide on a dense set of times; the setting of PDMPs does not capture such a phenomenon, but does not require the process $X$ to converge a.s. at infinity. The construction of a growth-fragmentation process is linked to the study of the non-conservative growth-fragmentation equation \eqref{eq:GrowthFrag}, whereas our construction of the cell process enables us to study the behavior of its conservative version \eqref{eq:GrowthFragCons}. In Section~\ref{sec:balance}, we shall see that there is no major differences for the well-posedness of the equation in the conservative and the non-conservative settings. However, the tails of the stationary distribution are rather different in the two frameworks, so the results of Section~\ref{sec:tail} are to be compared to the computations of \cite{BCG13} when $\lambda=0$. The Malthusian parameter $\lambda$ being the exponential growth rate of the mass of the population, it is clear that it is null in the conservative case.
\end{remark}

The rest of this paper is organized as follows: in Section~\ref{sec:balance}, we study the Harris recurrence of $X$ as well as the existence and uniqueness of its stationary distribution $\pi$, and we compare our conditions to those of \cite{CDG12}. In Section~\ref{sec:tail}, we study the moments of $\pi$, we derive precise upper bounds for its tails of distribution in the neighborhoods of both 0 and $+\infty$ and we compare our conditions to those of \cite{BCG13}.

\section{Balance Between Growth and Fragmentation}
\label{sec:balance}

To investigate the assumptions used in \cite{CDG12}, we turn to the study of the Markov process generated by \eqref{eq:genGrowthFrag}. More precisely, we will provide a justification to the balance between $\tau$ and $\beta$ with the help of a Foster-Lyapunov criterion. Note that we shall not require the fragmentation kernel $Q(x,dy)$ to admit a density with respect to the Lebesgue measure $\Leb(dy)$. Moreover, in order to be as general as possible, we do not stick to the biological framework and thus do not assume that $\int_0^1 Q(x,dy)=1/2,$ which will be (technically) replaced by Assumption~\ref{assumption:GrowthFragMoments}.i) below.

We start by stating general assumptions on the growth and fragmentation rates.
\begin{assumption}[Behavior of $\tau$ and $\beta$]
Assume that:
\begin{enumerate}[i)]
	\item The functions $\beta$ and $\tau$ are continuous, and $\tau$ is locally Lipschitz.
	\item For any $x>0$, $\beta(x),\tau(x)>0$.
	\item There exist constants $\gamma_0,\gamma_\infty,\nu_0,\nu_\infty$ and $\beta_0,\beta_\infty,\tau_0,\tau_\infty>0$ such that
\[ \beta(x)\underset{x\to0}{\sim}\beta_0 x^{\gamma_0},\quad \beta(x)\underset{x\to\infty}{\sim}\beta_\infty x^{\gamma_\infty},\quad\tau(x)\underset{x\to0}{\sim}\tau_0 x^{\nu_0},\quad \tau(x)\underset{x\to\infty}{\sim}\tau_\infty x^{\nu_\infty}.\]
\end{enumerate}
\label{assumption:GrowthFragRates}
\end{assumption}
Note that, if $\tau$ and $\beta$ satisfy Assumption~\ref{assumption:GrowthFragRates}, then Assumptions~(2.18) and (2.19) in \cite{CDG12} are fulfilled (by taking $\mu=|\gamma_\infty|$ or $\mu=|\nu_\infty|$, and $r_0=|\nu_0|$ therein).

The following assumption concerns the expected behavior of the fragmentation, and is easy to check in most cases, especially if $Q(x,\cdot)$ does not depend on $x$. For any $a\in\R$, we define the moment of order $a$ of $Q(x,\cdot)$ by
\[M_x(a):=\int_0^1y^aQ(x,dy),\quad M(a):=\sup_{x>0}M_x(a).\]

\begin{assumption}[Moments of $Q$]
Assume that:
\begin{enumerate}[i)]
	\item There exists $a>0$ such that $M(a)<1$.
	\item There exists $b>0$ such that $M(-b)<+\infty$.
	\item For any $x>0,Q(x,\{1\})=0$.
\end{enumerate}
\label{assumption:GrowthFragMoments}\end{assumption}
Note that, in particular, Assumption~\ref{assumption:GrowthFragMoments}.i) and ii) imply that, for any $x>0,Q(x,\{1\})<1$ and $Q(x,\{0\})=0.$ Assumption~\ref{assumption:GrowthFragMoments}.iii) means that there are no \emph{phantom jumps}, i.e. divisions of the bacteria without loss of mass. It is easy to deal with a process with no phantom jumps with the following thinning technique: if $X$ is generated by \eqref{eq:genGrowthFrag} and $Q$ admits the decomposition
\[Q(x,dy)=Q(x,\{1\})\delta_1+(1-Q(x,\{1\}))Q'(x,dy),\]
then notice that \eqref{eq:genGrowthFrag} writes 
\[\mathcal Lf(x)=\tau(x)f'(x)+\beta'(x)\int_0^1[f(xy)-f(x)]Q'(x,dy),\]
with $\beta'(x)=(1-Q(x,\{1\})\beta(x)$ and $Q'(x,\{1\})=0$.

Let us make another assumption, concerning the balance between the growth rate and the fragmentation rate in the neighborhoods of $0$ and $+\infty$, which is fundamental to obtain an interesting Markov process. 

\begin{assumption}[Balance of $\beta$ and $\tau$]
Assume that
	\[\gamma_0>\nu_0-1,\quad \gamma_\infty>\nu_\infty-1.\]
\label{assumption:GrowthFragBalance}\end{assumption}

Let us mention that Assumptions~\ref{assumption:GrowthFragRates}.iii) and \ref{assumption:GrowthFragBalance}, could be replaced by integrability conditions in the neighborhoods of 0 or $+\infty$, see Assumptions~(2.21) and (2.22) in \cite{CDG12}. However, we make those hypotheses for the sake of simplicity, and for easier comparisons of our results to \cite{CDG12,BCG13}.

\begin{remark}[The critical case]
This remark concerns the whole paper, and may be omitted at first reading. Throughout Section~\ref{sec:balance}, we can weaken Assumption~\ref{assumption:GrowthFragBalance} with the following:
\begin{enumerate}[i)]
	\item Either
	\begin{equation}
	\gamma_0>\nu_0-1,\quad\text{or}\quad\gamma_0=\nu_0-1 \text{ and }\frac b{M(-b)-1}<\frac{\beta_0}{\tau_0}.
	\label{eq:BalanceCriticZero}
	\end{equation}
	\item Either
	\begin{equation}
	\gamma_\infty>\nu_\infty-1\quad\text{or}\quad\gamma_\infty=\nu_\infty-1 \text{ and }\frac a{1-M(a)}<\frac{\tau_\infty}{\beta_\infty}.
	\label{eq:BalanceCriticInfty}
	\end{equation}
\end{enumerate}
Indeed, a careful reading of the proof of Theorem~\ref{thm:GrowthFragErgodicity} shows that computations are similar, and the only change lies in the coefficients in \eqref{eq:preuveGrowthFrag1} and \eqref{eq:preuveGrowthFrag2}, which are still negative under \eqref{eq:BalanceCriticZero} and \eqref{eq:BalanceCriticInfty}. This corresponds to the critical case of the growth-fragmentation equations (see for instance \cite{BW16,DE16}).

However, the behavior of the tail of the stationary distribution changes radically in the critical case. As a consequence, Section~\ref{sec:tail} is written in the framework of Assumption~\ref{assumption:GrowthFragBalance} only. Indeed, it is crucial to be able to choose $a$ as large as possible (which is ensured in Assumption~\ref{assumption:BoundV}), so that $\pi$ admits moments of any order. This is not possible under \eqref{eq:BalanceCriticInfty}, since then
\[\lim_{a\to+\infty}\frac a{1-M(a)}=+\infty,\]
so that the Foster-Lyapunov criterion does not apply and we expect the stationary measure to have heavy tails.
\end{remark}

Define $V$ as a smooth, convex function on $(0,\infty)$ such that 
\begin{equation}
V(x)=
\left\{\begin{array}{ll}
x^{-b}&\text{if }x\in(0,1],\\
x^a&\text{if }x\in[2,\infty),\\
\end{array}\right.
\label{eq:DefV}
\end{equation}
where $a$ and $b$ satisfy Assumption~\ref{assumption:GrowthFragMoments}. We can now state the main result of this article.

\begin{theorem}[Behavior of the cell process]
Let $X$ be a PDMP generated by \eqref{eq:genGrowthFrag}. If Assumptions~\ref{assumption:GrowthFragRates}, \ref{assumption:GrowthFragMoments} and \ref{assumption:GrowthFragBalance} are in force, then $X$ is irreducible, Harris recurrent and aperiodic, compact sets are petite for $X$, and the process possesses a unique (up to a multiplicative constant) stationary measure $\pi$.

Moreover, if
\[b\geq\nu_0-1,\quad a\geq-\gamma_\infty,\]
then $X$ is positive Harris recurrent and $\pi$ is a probability measure.

Furthermore, if
\[\nu_0\leq1,\quad \gamma_\infty\geq0,\]
then $X$ is exponentially ergodic in $(1+V)$-norm.
\label{thm:GrowthFragErgodicity}\end{theorem}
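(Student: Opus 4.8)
The strategy is the classical Foster–Lyapunov / Meyn–Tweedie programme adapted to PDMPs (as in Davis, and Meyn–Tweedie Chapters~14--15), with the function $V$ from \eqref{eq:DefV} as the Lyapunov function. I would organise the argument in four blocks.

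\emph{Irreducibility, petiteness, aperiodicity.} First I would verify that $X$ is a well-defined non-explosive Feller PDMP on $(0,\infty)$: local Lipschitzness of $\tau$ (Assumption~\ref{assumption:GrowthFragRates}.i) gives a unique deterministic flow between jumps, positivity of $\beta$ makes the jump rate locally bounded below and the balance Assumption~\ref{assumption:GrowthFragBalance} will (via the drift estimate below) rule out explosion toward $0$ or $\infty$. For irreducibility I would show that from any $x>0$ the flow $\phi_t(x)$ increases (since $\tau>0$) so the process can reach arbitrarily large values by simply following the flow, and a single fragmentation using Assumption~\ref{assumption:GrowthFragMoments}.i)--iii) (no atom at $1$, no mass escaping to $0$) sends it strictly below its current position; iterating, one reaches any neighbourhood of any target point with positive probability, so $X$ is $\psi$-irreducible for $\psi$ = Lebesgue measure on $(0,\infty)$. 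Then compact sets $[\epsilon,R]\subset(0,\infty)$ are petite: the resolvent kernel has a component absolutely continuous w.r.t.\ Lebesgue measure, uniformly on $[\epsilon,R]$ — this follows from the standard PDMP argument that the law of the position after one jump, started anywhere in the compact, dominates a fixed nontrivial measure (here one uses that $\beta$ is bounded away from $0$ and $\infty$ on compacts and that $Q(x,\cdot)$ pushed forward by $y\mapsto xy$ spreads mass). Aperiodicity follows because the flow gives positive probability of staying near a point over a time interval, precluding a nontrivial cyclic decomposition.

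\emph{The drift inequality — the heart of the proof.} I would compute $\mathcal L V(x)$ using \eqref{eq:genGrowthFrag}. For $x\geq 2$, $V(x)=x^a$, so $V'(x)=ax^{a-1}$ and
\[
\mathcal L V(x)=a\tau(x)x^{a-1}+\beta(x)\int_0^1\big[(xy)^a-x^a\big]Q(x,dy)
= a\tau(x)x^{a-1}-\beta(x)x^a\big(1-M_x(a)\big).
\]
Under Assumption~\ref{assumption:GrowthFragRates}.iii) the first term is $\sim a\tau_\infty x^{a-1+\nu_\infty}$ and the second is $\leq -\beta_\infty x^{a+\gamma_\infty}(1-M(a))$ (up to $(1+o(1))$), and since $\gamma_\infty>\nu_\infty-1$ (Assumption~\ref{assumption:GrowthFragBalance}) the negative term dominates as $x\to\infty$; this is the content of \eqref{eq:preuveGrowthFrag2}. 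Symmetrically, for $x\leq 1$, $V(x)=x^{-b}$, $V'(x)=-bx^{-b-1}$, and
\[
\mathcal L V(x)=-b\tau(x)x^{-b-1}+\beta(x)x^{-b}\big(M_x(-b)-1\big),
\]
where now the drift term $-b\tau(x)x^{-b-1}\sim -b\tau_0 x^{-b-1+\nu_0}$ is negative and of order $x^{-b-1+\nu_0}$, while the fragmentation term is $O(x^{-b+\gamma_0})$; since $\gamma_0>\nu_0-1$ the drift term dominates near $0$ and drives $\mathcal L V$ to $-\infty$ — this is \eqref{eq:preuveGrowthFrag1}. (Here Assumption~\ref{assumption:GrowthFragMoments}.ii) is what makes $M(-b)$ finite so the fragmentation term is controlled; the refinements \eqref{eq:BalanceCriticZero}--\eqref{eq:BalanceCriticInfty} are exactly the statement that even in the equality case the constant in front of the dominant power has the right sign.) On the compact transition region $[1,2]$ all quantities are bounded by continuity. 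Combining, there are constants $c>0$ and $d<\infty$ and a compact $K\subset(0,\infty)$ with $\mathcal L V\leq -c V + d\,\indic_K$ — a Foster–Lyapunov drift condition of the strong (geometric) type. I should be careful to check $V\in\mathcal D(\mathcal L)$, i.e.\ that $\E_x[V(X_t)]<\infty$ and the Dynkin formula is valid; this follows from the drift inequality itself by a localisation/Gronwall argument, which is the one genuinely fiddly technical point.

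\emph{Conclusions from the drift condition.} With $\psi$-irreducibility, petiteness of compacts, aperiodicity and $\mathcal L V\leq -cV+d\,\indic_K$ in hand, the Meyn–Tweedie theory (Theorems~14.0.1 and~15.0.1, in the continuous-time form, e.g.\ Down–Meyn–Tweedie) yields at once: Harris recurrence, existence of a unique (up to scaling) invariant measure $\pi$, and, because the drift function $-cV$ is negative definite away from $K$, positive Harris recurrence with $\pi(V)<\infty$ and $V$-geometric (i.e.\ exponential) ergodicity, $\|P_t(x,\cdot)-\pi\|_{1+V}\leq C(1+V(x))e^{-\alpha t}$. For the intermediate statements I would instead choose the exponents more modestly: under $b\geq \nu_0-1$ and $a\geq-\gamma_\infty$ one only gets $\mathcal L V\leq -c+d\,\indic_K$ (bounded, not coercive, off $K$) which is enough for $\pi$ to be finite, hence a probability after normalisation; and the full exponential-ergodicity statement needs $\nu_0\leq1$, $\gamma_\infty\geq0$ precisely so that $V$ itself (not just $1$) can be used as the coercive term, i.e.\ so that the leading powers computed above make $\mathcal LV/V$ stay bounded above by a negative constant outside a compact. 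The main obstacle is really the drift computation near $0$ and $\infty$: getting the asymptotics of $\mathcal L V$ right, matching powers, and verifying that the sign conditions in Assumption~\ref{assumption:GrowthFragBalance} (resp.\ their critical refinements) are exactly what is needed — everything else is an application of the standard machinery once that estimate is secured.
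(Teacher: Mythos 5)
Your overall strategy is the same as the paper's (Foster--Lyapunov with the piecewise power function $V$, after establishing irreducibility, petiteness and aperiodicity via the flow and jump structure), and the regime $x\leq 1$ is handled correctly: there $xy\leq 1$ for every $y\in(0,1)$, so $V(xy)=(xy)^{-b}$ and your formula $\mathcal LV(x)=-b\tau(x)x^{-b-1}+\beta(x)x^{-b}(M_x(-b)-1)$ matches \eqref{eq:preuveGrowthFrag6} exactly. However there is a genuine error in the central drift computation for $x\geq 2$. You write
\[
\mathcal L V(x)=a\tau(x)x^{a-1}+\beta(x)\int_0^1\big[(xy)^a-x^a\big]Q(x,dy),
\]
which silently replaces $V(xy)$ by $(xy)^a$ for every $y\in(0,1)$. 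This is false: a jump from $x\geq 2$ lands at $xy$, which lies in $(0,1]$ whenever $y\leq 1/x$, and there $V(xy)=(xy)^{-b}$, which is large, not small. Your identity therefore underestimates the jump contribution, and the sign argument built on it is not justified as stated. The paper handles this by splitting $\int_0^1 V(xy)\,Q(x,dy)$ into the ranges $y\leq 1/x$, $1/x\leq y\leq 2/x$, $y\geq 2/x$, bounding the first piece by $x^{-b}M_x(-b)$ and the middle one by $2^a$, which produces the correct inequality
\[
\mathcal LV(x)\leq\left(a\frac{\tau(x)}{x}-\beta(x)\Big(1-M_x(a)-\frac{M_x(-b)}{x^bV(x)}-\frac{2^a}{V(x)}\Big)\right)V(x).
\]
The extra terms are $o(1)$ as $x\to\infty$, so the \emph{conclusion} you reach is the right one, but it is not free: it is precisely at this point that Assumption~\ref{assumption:GrowthFragMoments}.ii) ($M(-b)<\infty$) is used in the large-$x$ regime, whereas your sketch invokes it only near $0$. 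Without that control the "fragmentation term" $\int_{y\leq 1/x}V(xy)Q(x,dy)$ could overwhelm the negative drift even when $\gamma_\infty>\nu_\infty-1$.

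Two smaller remarks. You have swapped the equation labels: \eqref{eq:preuveGrowthFrag1} is the $x\to+\infty$ estimate and \eqref{eq:preuveGrowthFrag2} the $x\to 0$ one. And in the irreducibility step, "iterating" the downward fragmentation is the genuinely delicate point: one must show that with positive probability finitely many jumps in a short time window bring the process below a fixed level, which the paper quantifies via $M(a)<1$ and a Markov/Chebyshev bound on $(X^z_{t_0})^a$ after $n$ jumps; your sketch should acknowledge that the number of jumps needed must be chosen in terms of $z$, $z_0$ and $M(a)$, rather than appearing to follow from a single split. Once these two points are repaired, the remaining machinery (Meyn--Tweedie via Theorem~\ref{thm:FosterLyapunov}.i)--iii)) is applied exactly as in the paper.
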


\begin{remark}[Link with the conditions of \cite{CDG12}]
We highlight the equivalence of Assumption~\ref{assumption:GrowthFragBalance} and \cite[Eq.~(2.4) and (2.5)]{CDG12}. The condition \cite[Eq.~(2.6)]{CDG12} writes in our context
\[\int_0^u Q(x,dy)\leq\min\left(1,Cu^{\bar\gamma}\right),\]
which is implied by Assumption~\ref{assumption:GrowthFragMoments}.ii) together with the condition $b\geq\nu_0-1$, as soon as $Q(x,\cdot)$ admits a density with respect to $\Leb$. Let us also mention that counterexamples for the existence of the stationary measure are provided in \cite{DG10}, when $\beta$ is constant and $\tau$ is affine.
\end{remark}

Before proving Theorem~\ref{thm:GrowthFragErgodicity}, let us shortly present the Foster-Lyapunov criterion, which is the main tool for our proof (the interested reader may find deeper insights in \cite{MT93} or \cite{MT93b}). The idea is to find a so-called Lyapunov function $V$ controlling the excursions of $X$ out of petite sets. Recall that a set $K\subseteq\R_+$ is petite if there exists a probability distribution $\mathscr A$ over $\R_+$ and some non-trivial positive measure $\nu$ over $\R_+$ such that, for any $x\in K$, $\int_0^\infty\delta_xP_{t}\mathscr A(dt)\geq\nu$. We produce here three criteria, adapted from \cite[Theorems~3.2, 4.2 and 6.1]{MT93b}, which provide stronger and stronger results. Recall that, for some norm-like fonction $V$, we define the $V$-norm of a probability measure $\mu$ by
\[\|\mu\|_V:=\sup_{|f|\leq V}|\mu(f)|=\sup_{|f|\leq V}\left|\int fd\mu\right|.\]

\begin{theorem}[Foster-Lyapunov criterion]
Let $X$ be a Markov process with c\`{a}dl\`{a}g trajectories a.s. Let $V\geq 1$ be a continuous norm-like real-valued function. Assume that compact sets of $(0,+\infty)$ are petite for $X$.
\begin{enumerate}[i)]
	\item If there exist a compact set $K$ and a positive constant $\alpha'$ such that
	\[\mathcal LV\leq \alpha'\indic_K,\]
	then $X$ is Harris recurrent and possesses a unique (up to a multiplicative constant) stationary measure $\pi$.
	\item Moreover, if there exist a function $f\geq1$ and a positive constant $\alpha$ such that
	\[\mathcal LV\leq -\alpha f+\alpha'\indic_K,\]
	then $X$ is positive Harris recurrent, $\pi$ is a probability measure and $\pi(f)<+\infty$.
	\item Moreover, if $f\geq V$, then $X$ is exponentially ergodic and there exist $C,v>0$ such that
	\[\|\mu_t-\pi\|_{1+V}\leq C(1+\mu_0(V))\e^{-vt}.\] 
\end{enumerate}
\label{thm:FosterLyapunov}
\end{theorem}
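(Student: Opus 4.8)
The plan is to read Theorem~\ref{thm:FosterLyapunov} as a continuous-time repackaging of \cite[Theorems~3.2, 4.2 and 6.1]{MT93b}; the only real work is to match the hypotheses and to license the passage from the pointwise drift bounds on $\mathcal{L}V$ to their integrated forms. Here $\mathcal{L}$ is to be understood as the extended generator (in the sense of \cite{Dav93,MT93b}), for which Dynkin's formula holds: for $V$ in its domain,
\[ M_t^V := V(X_t) - V(X_0) - \int_0^t \mathcal{L}V(X_s)\,ds \]
is a local martingale. Localizing along stopping times $T_n \uparrow \infty$ one gets $\E_x[V(X_{t\wedge T_n})] = V(x) + \E_x[\int_0^{t\wedge T_n}\mathcal{L}V(X_s)\,ds]$. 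Because $V \geq 1$ is norm-like, its sublevel sets are compact subsets of $(0,\infty)$ and hence petite by hypothesis; combined with an upper bound on $\mathcal{L}V$ off $K$ this also prevents $X$ from reaching the boundary $\{0,\infty\}$ in finite time, so one may let $n\to\infty$ (Fatou on the left, monotone or dominated convergence on the right) to obtain genuine integrated drift inequalities. This localization step is the only place where the c\`adl\`ag/PDMP structure is really used.

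For part (i), the hypothesis $\mathcal{L}V \leq \alpha'\indic_K$ yields
\[ \E_x[V(X_t)] \leq V(x) + \alpha'\,\E_x\!\left[\int_0^t \indic_K(X_s)\,ds\right], \]
which is precisely the recurrence drift condition of \cite{MT93b} relative to the petite set $K$; since $V$ is norm-like this makes $X$ non-evanescent and forces it to return to $K$, so \cite[Theorem~3.2]{MT93b} gives Harris recurrence, and uniqueness of $\pi$ up to a multiplicative constant is then a standard consequence of Harris recurrence.

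For parts (ii) and (iii), letting $n\to\infty$ in the localized identity gives, from $\mathcal{L}V \leq -\alpha f + \alpha'\indic_K$ with $f \geq 1$,
\[ \alpha\,\E_x\!\left[\int_0^t f(X_s)\,ds\right] \leq V(x) + \alpha' t, \]
the $f$-modulated positive-recurrence condition, so \cite[Theorem~4.2]{MT93b} yields positive Harris recurrence and that $\pi$ is a probability measure; integrating the drift inequality against $\pi$ (a truncation argument ensures $\int\mathcal{L}V\,d\pi \leq 0$) gives $\alpha\,\pi(f) \leq \alpha'\,\pi(K) \leq \alpha'$, hence $\pi(f) < +\infty$. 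If moreover $f \geq V$, the inequality is the geometric drift condition $\mathcal{L}V \leq -\alpha V + \alpha'\indic_K$, and \cite[Theorem~6.1]{MT93b} gives $V$-uniform geometric ergodicity: there are $C_0,v>0$ with $\|P_t(x,\cdot)-\pi\|_V \leq C_0\,V(x)\,\e^{-vt}$. Since $V \geq 1$ forces $1+V \leq 2V$, hence $\|\cdot\|_{1+V} \leq 2\|\cdot\|_V$, integrating against $\mu_0$ and setting $C = 2C_0$ yields the stated bound $\|\mu_t - \pi\|_{1+V} \leq C(1+\mu_0(V))\e^{-vt}$.

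I expect the main obstacle to be purely technical: first, checking that the operator $\mathcal{L}$ from the statement is contained in the extended generator of \cite{MT93b}, so that Dynkin's formula, and therefore the step from $\mathcal{L}V \leq \cdots$ to the integrated inequalities, is legitimate; second, the non-explosion and localization argument, i.e. justifying $\liminf_n\E_x[V(X_{t\wedge T_n})] \geq \E_x[V(X_t)]$ together with convergence of the integral terms, using that $V$ is norm-like and $\mathcal{L}V$ is bounded above off the petite set $K$. Once these are secured, parts (i)--(iii) are direct invocations of Theorems~3.2, 4.2 and 6.1 of \cite{MT93b} respectively.
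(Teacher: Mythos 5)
The paper gives no proof of this theorem: it is stated as an adaptation of Theorems~3.2, 4.2 and 6.1 of Meyn and Tweedie, and your proposal reduces it to exactly those same results, correctly matching conditions (CD1), (CD2) and (CD3) and supplying the standard technical glue (extended generator, Dynkin's formula with localization, and the passage from the $V$-norm to the $(1+V)$-norm). This is essentially the same approach as the paper's, just with the routine verifications made explicit.
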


Note that the exponential rate $v$ provided in Theorem~\ref{thm:FosterLyapunov} is not explicit; if one wants to obtain quantitative speeds of convergence, it is often useful to turn to ad hoc coupling methods (see \cite{BCGMZ13} for instance). Also, note that Assumption~\ref{assumption:GrowthFragMoments} is sufficient but not necessary to derive ergodicity from a Foster-Lyapunov criterion, since we only need the limits in \eqref{eq:preuveGrowthFrag1} and \eqref{eq:preuveGrowthFrag2} to be negative. Namely, we only ask the fragmentation kernel $Q(x,\cdot)$ to be not too close to 0 and 1, uniformly over $x$.

\begin{remark}[Construction of $V$]
If we are able to prove a Foster-Lyapunov criterion with a norm-like function $V$, we want to choose $V$ as explosive as possible (i.e. such that $V(x)$ goes quickly to $+\infty$ when $x\to0$ or $x\to+\infty$) to obtain better bounds for the tail of $\pi$, since $\pi(V)$ is finite: this is the purpose of Section~\ref{sec:tail}. If we define $V$ with \eqref{eq:DefV}, this choice brings us to choose $a$ and $b$ as large as possible in Assumption~\ref{assumption:GrowthFragMoments}. However, the larger $a$ and $b$, the slower the convergence (because of the term $\mu_0(V)$), so there is a balance to find here.

For many particular cell processes, it is possible to build a Lyapunov function of the form $x\mapsto\e^{\theta x}$, so that $\pi$ admits exponential moments up to $\theta$. We shall use a similar function in Section~\ref{sec:tail} to obtain bounds for the tails of the stationary distribution.
\end{remark}

\begin{proof}[Proof of Theorem~\ref{thm:GrowthFragErgodicity}]
We denote by $\varphi_z$ the unique maximal solution of $\partial_ty(t)=\tau(y(t))$ with initial condition $z$, and let $a,b>0$ be as in Assumption~\ref{assumption:GrowthFragMoments}. Firstly, we prove that compact sets are petite for $(X_t)_{t\geq0}$. Let $z_2>z_1>z_0>0$ and $z\in[z_0,z_1]$. Since $\tau>0$ on $[z_0,z_2]$, the function $\varphi_z$ is a diffeomorphism from $[0,\varphi_z^{-1}(z_2)]$ to $[z,z_2]$; let $t=\varphi^{-1}_{z_0}(z_2)$ be the maximum time for the flow to reach $z_2$ from $[z_0,z_1]$. Denote by $X^z$ the process generated by \eqref{eq:genGrowthFrag} such that $\mathscr L(X_0)=\delta_z$, and $T_n^z$ the epoch of its $n^\text{th}$ jump. Let $\mathscr A=\mathscr U([0,t])$. For any $x\in[z_1,z_2]$, we have
\begin{align}
\int_0^\infty\P(X_s^z\leq x)\mathscr A(ds)&\geq\frac1t\int_0^t\P(X_s^z\leq x|T_1^z>\varphi_z^{-1}(z_2))\P(T_1^z>\varphi_z^{-1}(z_2))ds\notag\\
&\geq\frac{\P(T_1^z>\varphi_z^{-1}(z_2))}t\int_0^t\P(\varphi_z(s)\leq x)ds\notag\\
&\geq\frac{\P(T_1^z>\varphi_z^{-1}(z_2))}t\int_0^{\varphi_z^{-1}(x)}ds\notag\\
&\geq\frac{\P(T_1^z>\varphi_z^{-1}(z_2))}t\int_z^x(\varphi_z^{-1})'(u)du.\label{eq:preuveGrowthFrag3}
\end{align}
Since $\beta$ and $\tau$ are bounded on $[z_0,z_2]$, the following inequalities hold:
\begin{align*}
\P(T_1^z>\varphi_z^{-1}(z_2))&=\exp\left(-\int_0^{\varphi_z^{-1}(z_2)}\beta(\varphi_z(s))ds\right)\\
&=\exp\left(-\int_z^{z_2}\beta(u)(\varphi_z^{-1})'(u)du\right)\\
&\geq\exp\left(-(z_2-z_0)\sup_{[z_0,z_2]}\left(\beta(\varphi_z^{-1})'\right)\right)\\
&\geq\exp\left(-(z_2-z_0)\left(\sup_{[z_0,z_2]}\beta\right)\left(\inf_{[z_0,z_2]}\tau\right)^{-1}\right),
\end{align*}
since $\sup_{[z_0,z_2]}(\varphi_z^{-1})'=\left(\inf_{[z_0,z_2]}\tau\right)^{-1}$. Hence, there exists a constant $C$ such that, \eqref{eq:preuveGrowthFrag3} writes, for $x\in[z_1,z_2]$,
\[\int_0^\infty\P(X_s^z\leq x)\mathscr A(ds)\geq C(x-z_1),\]
which is also
\[\int_0^\infty\delta_zP_s\mathscr A(ds)\geq C\Leb_{[z_1,z_2]},\]
where $\Leb_K$ is the Lebesgue measure restricted to a Borelian set $K$. Hence, by definition, $[z_0,z_1]$ is a petite set for the process $X$.

Now, let us show that the process $(X_t)$ is $\Leb_{(0,\infty)}$-irreducible with similar arguments. Let $z_1>z_0>0$ and $z>0$. If $z\leq z_0$,
\begin{align}
\E\left[\int_0^\infty\indic_{\{z_0\leq X_t^z\leq z_1\}}dt\right]&\geq\P(T_1^z>\varphi_z^{-1}(z_1))\E\left[\left.\int_0^\infty\indic_{\{z_0\leq X_t^z\leq z_1\}}dt\right|T_1^z>\varphi^{-1}_z(z_1)\right]\notag\\
&\geq \exp\left(-(z_1-z_0)\left(\sup_{[z_0,z_1]}\beta\right)\left(\inf_{[z_0,z_1]}\tau\right)^{-1}\right)\varphi^{-1}_{z_0}(z_1).\label{eq:preuveGrowthFrag4}
\end{align}
If $z>z_0$, for any $t_0>0$ and $n\in\N$, the process $X^z$ has a positive probability of jumping $n$ times before time $t_0$. Recall that $\int_0^1y^aQ(x,dy)\leq M(a)<1$. For any $n>(\log(z)-\log(z_0))\log(M(a)^{-1})^{-1}$, let $0<\varepsilon<z_0^a-(zM(a)^n)^a$. By continuity of $(x,t)\mapsto\varphi_x(t)$, there exists $t_0>0$ small enough such that, $\forall (x,t)\in[0,z]\times[0,t_0]$,
\[\varphi_x(t)^a\leq x^a+\frac\eps{n+1},\quad\E[(X_{t_0}^z)^a|T_n^z\leq t_0]\leq (zM(a)^n)^a+\varepsilon<z_0^a.\]
Then, using Markov's inequality
\[\P(X_{t_0}^z\leq z_0|T_n^z\leq t_0<T_{n+1}^z)\geq1-\frac{\E[(X_{t_0}^z)^a|T_n^z\leq t_0<T_{n+1}^z]}{z_0^a}>0.\]
Then, $\P(X_{t_0}^z\leq z_0)>0$ for any $t_0$ small enough, and, using \eqref{eq:preuveGrowthFrag4}
\begin{align*}
&\E\left[\int_0^\infty\indic_{\{z_0\leq X_t^z\leq z_1\}}dt\right]\geq\E\left[\left.\int_{t_0}^\infty\indic_{\{z_0\leq X_t^z\leq z_1\}}dt\right|X_{t_0}^z\leq z_0\right]\P(X_{t_0}^z\leq z_0)\\
&\quad\geq \exp\left(-(z_1-z_0)\left(\sup_{[z_0,z_1]}\beta\right)\left(\inf_{[z_0,z_1]}\tau\right)^{-1}\right)\varphi^{-1}_{z_0}(z_1)\P(X_{t_0}^z\leq z_0)\\
&\quad>0.
\end{align*}
Aperiodicity is easily proven with similar arguments.

We turn to the proof of the Lyapunov condition. For $x\geq 2$, $V(x)=x^a$ and
\begin{align}
\mathcal LV(x) &=a\frac{\tau(x)}xV(x)+\beta(x)\int_0^1V(xy)Q(x,dy)-\beta(x)V(x)\notag\\
&\leq\left(a\frac{\tau(x)}x-\beta(x)\right)V(x)+\beta(x)\int_0^{1/x}(xy)^{-b}Q(x,dy)\notag\\
&\quad+\beta(x)\int_{1/x}^{2/x}2^aQ(x,dy) +\beta(x)\int_{2/x}^1(xy)^aQ(x,dy)\notag\\
&\leq\left(a\frac{\tau(x)}x-\beta(x)\right)V(x)+\beta(x)\left(x^{-b}M_x(-b)+2^a+x^aM_x(a)\right)\notag\\
&\leq\left(a\frac{\tau(x)}x-\beta(x)\left(1-M_x(a)-\frac{M_x(-b)}{x^bV(x)}-\frac{2^a}{V(x)}\right)\right)V(x).\label{eq:preuveGrowthFrag5}
\end{align}For $x\leq 1$, $V(x)=x^{-b}$ and
\begin{equation}
\mathcal LV(x)=\left(-b\frac{\tau(x)}x+\beta(x)(M_x(-b)-1)\right)V(x).
\label{eq:preuveGrowthFrag6}
\end{equation}
Combining $\gamma_\infty>\nu_\infty-1$ with Assumption~\ref{assumption:GrowthFragMoments}.i), for $x$ large enough we have
\begin{align*}
&a\frac{\tau(x)}x-\beta(x)\left(1-M_x(a)-\frac{M_x(-b)}{x^bV(x)}-\frac{2^a}{xV(x)}\right)\\
&\quad\leq a\frac{\tau(x)}x-\beta(x)\left(1-M(a)+o(1)\right)\leq0.
\end{align*}
Likewise, combining $\gamma_0>\nu_0-1$ with Assumption~\ref{assumption:GrowthFragMoments}.ii),
\[-b\frac{\tau(x)}x+\beta(x)(M_x(-b)-1)\leq-b\frac{\tau(x)}x+\beta(x)\left(M(-b)-1\right)\leq0\]
for $x$ close enough to 0. Then, Theorem~\ref{thm:FosterLyapunov}.i) entails that $X$ is Harris recurrent, thus admits a unique stationary measure (see for instance \cite{KM94}).

Note that \eqref{eq:preuveGrowthFrag5} writes
\[\mathcal LV(x)\leq-\beta_\infty(1-M(a)+o(1))x^{a+\gamma_\infty},\]
so that, if we can choose $a\geq-\gamma_\infty$, then
\[\lim_{x\to\infty}-\beta_\infty(1-M(a)+o(1))x^{a+\gamma_\infty}<0.\]
Likewise, \eqref{eq:preuveGrowthFrag6} writes
\begin{equation}
\mathcal LV(x)\leq-(b\tau_0+o(1))x^{\nu_0-1-b},
\label{eq:preuveGrowthFrag8}
\end{equation}
so, if $b\geq\nu_0-1$, we get
\[\lim_{x\to0}-(b\tau_0+o(1))x^{\nu_0-1-b}<0.\]
Then, there exist positive constants $A,\alpha,\alpha'$
\[\mathcal LV\leq-\alpha f+\alpha'\indic_{[1/A,A]},\]
where $f\geq1$ is a smooth function, such that $f(x)=x^{\nu_0-1-b}$ for $x$ close to 0, and $f(x)=x^{a+\gamma_\infty}$ for $x$ large enough. Then, Theorem~\ref{thm:FosterLyapunov}.ii) ensures positive Harris recurrence for $X$.

Now, if we assume $\gamma_\infty\geq0$ and $\nu_0\leq1$ in addition, then there exists $\alpha>0$ such that
\begin{align}
&\lim_{x\to+\infty}a\frac{\tau(x)}x-\beta(x)\left(1-M_x(a)-\frac{M_x(-b)}{x^bV(x)}-\frac{2^a}{xV(x)}\right)\notag\\
&\quad\leq\lim_{x\to+\infty}a\frac{\tau(x)}x-\beta(x)\left(1-M(a)+o(1)\right)\leq-\alpha,
\label{eq:preuveGrowthFrag1}\end{align}
and
\begin{equation}
\lim_{x\to0}-b\frac{\tau(x)}x+\beta(x)(M_x(-b)-1)\leq\lim_{x\to0}-b\frac{\tau(x)}x+\beta(x)\left(M(-b)-1\right)\leq-\alpha.
\label{eq:preuveGrowthFrag2}\end{equation}
Combining \eqref{eq:preuveGrowthFrag5} and \eqref{eq:preuveGrowthFrag6} with \eqref{eq:preuveGrowthFrag1} and \eqref{eq:preuveGrowthFrag2} respectively, and since $V$ is bounded on $[1,2]$, there exist positive constants $A,\alpha'$ such that
\[\mathcal LV\leq-\alpha V+\alpha'\indic_{[1/A,A]}.\]
The function $V$ is thus a Lyapunov function, for which Theorem~\ref{thm:FosterLyapunov}.iii) entails exponential ergodicity for $X$.
\end{proof}

\section{Tails of the Stationary Distribution}
\label{sec:tail}

In this section, we use, and reinforce when necessary, the results of Theorem~\ref{thm:GrowthFragErgodicity} to study the asymptotic behavior of the tails of distribution of the stationary measure $\pi$. We will naturally divide this section into two parts, to study the behavior of $\pi(dx)$ as $x\to0$ and as $x\to+\infty$. Hence, throughout this section, we shall assume that $X$ satisfies Assumptions~\ref{assumption:GrowthFragRates}, \ref{assumption:GrowthFragMoments} and \ref{assumption:GrowthFragBalance}. The key point is to use the fact that $\pi(f)<+\infty$ provided in the second part of Theorem~\ref{thm:FosterLyapunov}. We recall that $\Leb$ stands for the Lebesgue measure on $\R$.

In order to compare our results to those of \cite[Theorem~1.8]{BCG13}, we consider the same framework and make the following assumption:

\begin{assumption}[Density of $Q$ and $\pi$]
Assume that:
\begin{enumerate}[i)]
	\item For any $x>0$, $Q(x,\cdot)\ll\Leb$ and $Q(x,dy)=q(y)dy$, and there exist constants $q_0,q_1\geq0$ and $\mu_0,\mu_1>-1$ such that
	\[q(x)\underset{x\to0}{=}q_0x^{\mu_0}+o(x^{\mu_0}),\quad q(x)\underset{x\to1}{=}q_1(1-x)^{\mu_1}+o((1-x)^{\mu_1}).\]
	\item $\pi\ll\Leb$ and $\pi(dx)=G(x)dx$, and there exist constants $G_0,G_\infty,\widetilde G_\infty>0$ and $\alpha_0,\alpha_\infty,\widetilde \alpha_\infty\in\R$ such that
	\[G(x)\underset{x\to0}{\sim}G_0x^{\alpha_0},\quad G(x)\underset{x\to+\infty}{\sim}G_\infty x^{\alpha_\infty}\exp\left(-\widetilde G_\infty x^{\widetilde\alpha_\infty}\right).\]
\end{enumerate}
\label{assumption:GrowthFragDensity}\end{assumption}

We do not require the coefficients $q_0,q_1$ to be (strictly) positive, so that this assumption can also cover the case $Q(x,dy)=\delta_{r}(dy)$ for $0<r<1$, which is widely used for modeling physical or biological situations. For the sake of simplicity, the hypotheses concerning the density of $\pi$ (resp. $Q$) in the neighborhood of both $0$ and $+\infty$ (resp. $0$ and $1$) are gathered in Assumption~\ref{assumption:GrowthFragDensity}, but it is clear that we only need either the assumption on the left behavior or on the right behavior to precise the fractional moments or the exponential moments of the stationary distribution. In the same spirit, we could weaken $Q(x,\cdot)\ll\Leb$ into $Q(x,\cdot)$ admitting a density with respect to $\Leb$ only in the neighborhoods of 0 and 1, bounded above by $q$.

\begin{remark}[Absolute continuity of $\pi$]
At first glance, Assumption~\ref{assumption:GrowthFragDensity}.ii) may seem disconcerting since $\pi$ is unknown; this is the very goal of this section to study its moments. However, for some models it is possible to prove the absolute continuity of $\pi$, or even get a non-tractable formula for its density (see e.g. \cite{DGR02,GK09,BCGMZ13} for the particular case of the TCP window size process). In such cases, the question of existence of its moments is still not trivial. Still, Assumption~\ref{assumption:GrowthFragDensity}.ii) is stated only to make easier comparisons with the estimates obtained with deterministic methods, and is not needed for the important results of the present paper. However, we stress that Assumption~\ref{assumption:GrowthFragDensity}.i) is in a way more fundamental, since it implies directly Assumption~\ref{assumption:BoundV}, which is needed to study the behavior of $\pi$ in the neighborhood of $+\infty$ (see Proposition~\ref{prop:assumption8}).
\end{remark}

\begin{theorem}[Negative moments of $\pi$]
Let $X$ be the PDMP generated by \eqref{eq:genGrowthFrag}. If Assumptions~\ref{assumption:GrowthFragRates},  \ref{assumption:GrowthFragMoments} and \ref{assumption:GrowthFragBalance} hold, and if
\begin{equation}
b\geq\nu_0-1,
\label{eq:balanceGammaNuMuZero}
\end{equation}
then
\[\int_0^1x^{\nu_0-1-b}\pi(dx)<+\infty.\]

Moreover, if Assumption~\ref{assumption:GrowthFragDensity} holds and $\mu_0+2-\nu_0>0$, then
\[\alpha_0\geq\mu_0+1-\nu_0.\]
\label{thm:tailZero}
\end{theorem}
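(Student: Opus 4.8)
The plan is to prove the two assertions separately, each relying on the Foster--Lyapunov machinery from Theorem~\ref{thm:GrowthFragErgodicity} and then on a sharper argument for the density exponent.

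For the first assertion, I would invoke the second part of Theorem~\ref{thm:GrowthFragErgodicity}: under Assumptions~\ref{assumption:GrowthFragRates}, \ref{assumption:GrowthFragMoments}, \ref{assumption:GrowthFragBalance} and the condition $b\geq\nu_0-1$, the process $X$ is positive Harris recurrent and, by Theorem~\ref{thm:FosterLyapunov}.ii), the stationary probability $\pi$ satisfies $\pi(f)<+\infty$ for the norm-like function $f$ built in the proof of Theorem~\ref{thm:GrowthFragErgodicity}, which near $0$ behaves like $x\mapsto x^{\nu_0-1-b}$. Since $\pi$ is a probability measure and $f$ is bounded away from the origin, $\pi(f)<+\infty$ immediately gives $\int_0^1 x^{\nu_0-1-b}\,\pi(dx)<+\infty$. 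This part is essentially a citation of what has already been established; the only care needed is to make sure the exponent $\nu_0-1-b$ is exactly the one appearing in \eqref{eq:preuveGrowthFrag8}, which it is.

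For the second assertion I would argue by contradiction (or rather, directly, by integrating the known moment bound against the assumed density asymptotics). Under Assumption~\ref{assumption:GrowthFragDensity}.ii), near $0$ we have $\pi(dx)=G(x)\,dx$ with $G(x)\sim G_0 x^{\alpha_0}$. The finiteness $\int_0^1 x^{\nu_0-1-b}G(x)\,dx<+\infty$ forces $\alpha_0+\nu_0-1-b>-1$, i.e. $\alpha_0>b-\nu_0$, for \emph{every} admissible $b$. The crucial point is then to push $b$ as large as permitted by Assumption~\ref{assumption:GrowthFragMoments}.ii), which requires $M(-b)<+\infty$; and this is where Assumption~\ref{assumption:GrowthFragDensity}.i) enters. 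Indeed, with $Q(x,dy)=q(y)\,dy$ and $q(y)\sim q_0 y^{\mu_0}$ as $y\to 0$ (with $\mu_0>-1$), one has
\[
M_x(-b)=\int_0^1 y^{-b}q(y)\,dy<+\infty \quad\Longleftrightarrow\quad -b+\mu_0>-1,
\]
so $M(-b)<+\infty$ precisely for $b<\mu_0+1$. (One should check that the $\sup_x$ defining $M(-b)$ is also finite; since $q$ does not depend on $x$ here, this is automatic, and the weakened version mentioned after Assumption~\ref{assumption:GrowthFragDensity} covers the $x$-dependent case.) Hence any $b<\mu_0+1$ is admissible, and the extra hypothesis $\mu_0+2-\nu_0>0$ is exactly what guarantees that such $b$ can be chosen to \emph{also} satisfy $b\geq\nu_0-1$ (the interval $[\nu_0-1,\,\mu_0+1)$ is then nonempty). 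Letting $b\uparrow\mu_0+1$ in the inequality $\alpha_0>b-\nu_0$ yields $\alpha_0\geq\mu_0+1-\nu_0$, which is the claim.

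The main obstacle, and the step deserving the most care, is the passage to the limit $b\uparrow\mu_0+1$: the Foster--Lyapunov argument only delivers a \emph{strict} inequality $\alpha_0>b-\nu_0$ for each fixed admissible $b<\mu_0+1$, and one must argue that the supremum of the admissible $b$'s can be approached (it need not be attained, since $M(-(\mu_0+1))$ may diverge) to conclude the \emph{non-strict} bound $\alpha_0\geq\mu_0+1-\nu_0$. This is a soft limiting argument, but it is the place where the hypothesis $\mu_0>-1$ and the precise form of the density asymptotics are genuinely used; everything else is bookkeeping with the exponents already computed in the proof of Theorem~\ref{thm:GrowthFragErgodicity}.
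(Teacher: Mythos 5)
Your proof is correct and follows essentially the same route as the paper: part one is the Lyapunov estimate \eqref{eq:preuveGrowthFrag8} fed into Theorem~\ref{thm:FosterLyapunov}.ii), and part two observes (via Assumption~\ref{assumption:GrowthFragDensity}.i)) that $M(-b)<\infty$ exactly for $b<\mu_0+1$, applies the same Lyapunov bound for each such $b$, integrates against $G(x)\sim G_0x^{\alpha_0}$, and lets $b\uparrow\mu_0+1$. The paper parametrizes the limit as $b=\mu_0+1-\eps$ with $\eps\downarrow 0$, which is the identical one-parameter family you use, and your remark that $\mu_0+2-\nu_0>0$ is what makes the interval $[\nu_0-1,\mu_0+1)$ nonempty matches the paper's observation that $f$ is norm-like precisely when $\mu_0+1-\nu_0>-1$.
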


\begin{proof}
The first part of the theorem is a straightforward consequence of \eqref{eq:preuveGrowthFrag8}.

Combining Assumption~\ref{assumption:GrowthFragMoments}.ii) with Assumption~\ref{assumption:GrowthFragDensity}.i), we naturally have to take $b<\mu_0+1$. Thus, for any $\eps\in(0,\mu_0+1)$, we take $b=\mu_0+1-\eps$. Define $V$ with \eqref{eq:DefV} as before, so that, for $x\leq 1$, $V(x)=x^{-\mu_0-1+\eps}$ and
\[\mathcal LV(x)\leq\left(-b\frac{\tau(x)}x+\beta(x)(M(-b)-1)\right)V(x)\underset{x\to0}{\sim}-b\tau_0x^{\nu_0-\mu_0-2+\eps}.\]
Applying Theorem~\ref{thm:FosterLyapunov}.ii) with $f(x)=x^{\nu_0-\mu_0-2+\eps}$, which tends to $+\infty$ when $\mu_0+1-\nu_0>-1$, we have $\pi(f)<+\infty$ so
\[\int_0^1x^{\alpha_0+\nu_0-\mu_0-2+\eps}dx<+\infty,\quad \alpha_0>1+\mu_0-\nu_0+\eps,\]
for any $\eps>0$. Thus $\alpha_0\geq\mu_0+1-\nu_0$.
\end{proof}

Now, we turn to the study of the tail of distribution of $\pi(dx)$ as $x\to+\infty$. Since choosing a polynomial function as a Lyapunov function can only provide the existence of moments for $\pi$, we need to introduce a more coercive function to study in detail the behavior of its tail of distribution and get the existence of exponential moments. We begin with the following assumption.

\begin{assumption}[Uniform asymptotic bound of the fragmentation]
Let $\theta=\gamma_\infty+1-\nu_\infty$. Assume there exists $0<C<1$ such that, for any $\eps>0$ and $0<\eta<\beta_\infty(\theta\tau_\infty)^{-1}$, there exists $x_0>0$ such that
\[\sup_{x\geq x_0}\int_0^1y^{-\eps}\exp\left({\eta x^\theta(y^\theta-1)}\right)Q(x,dy)<1-C.\]
\label{assumption:BoundV}
\end{assumption}

It is easy to understand this assumption if
\begin{equation}
\widetilde V(x)=x^{-\eps}\e^{\eta x^\theta}
\label{eq:DefV2}
\end{equation}
and if $\mathscr L(Y^{(x)})=Q(x,.)$; then, Assumption~\ref{assumption:BoundV} rewrites
\[\sup_{x\geq x_0}\frac{\E[\widetilde V(xY^{(x)})]}{\widetilde V(x)}\leq 1-C.\]
Once again, this is asking the fragmentation kernel to be not too close to 1. As we will see, this is quite natural when $Q$ has a regular behavior around $0$ and $1$.

\begin{proposition}
\label{prop:assumption8}
Assumption~\ref{assumption:BoundV} holds for any $C\in(0,1)$ whenever Assumption~\ref{assumption:GrowthFragDensity}.i) holds.
\end{proposition}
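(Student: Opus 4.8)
The plan is to check Assumption~\ref{assumption:BoundV} directly by estimating the integral $\int_0^1y^{-\eps}\exp(\eta x^\theta(y^\theta-1))Q(x,dy)$ for large $x$, using the explicit asymptotics of the density $q$ near $0$ and near $1$ provided by Assumption~\ref{assumption:GrowthFragDensity}.i). The key observation is that the exponential factor $\exp(\eta x^\theta(y^\theta-1))$ equals $1$ at $y=1$ and decays to $0$ as $x\to\infty$ for any fixed $y<1$; as $x$ grows, the mass of the integrand concentrates near $y=1$. So I would split $(0,1)$ into a neighborhood $(1-\delta,1)$ of $1$ and its complement $(0,1-\delta]$.

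First I would treat the bulk $(0,1-\delta]$: there $y^\theta-1\leq (1-\delta)^\theta-1<0$ (note $\theta=\gamma_\infty+1-\nu_\infty>0$ by Assumption~\ref{assumption:GrowthFragBalance}), so $\exp(\eta x^\theta(y^\theta-1))\leq \exp(-\eta x^\theta c_\delta)$ with $c_\delta>0$, and since $q(y)\sim q_0 y^{\mu_0}$ near $0$ with $\mu_0>-1$, the remaining integral $\int_0^{1-\delta}y^{-\eps}q(y)dy$ is finite for $\eps$ small (say $\eps<\mu_0+1$); hence this piece is bounded by $C_\delta\exp(-\eta x^\theta c_\delta)\to0$. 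Then I would treat the boundary layer $(1-\delta,1)$: there $y^{-\eps}\leq(1-\delta)^{-\eps}$ and $q(y)\leq (q_1+1)(1-y)^{\mu_1}$ for $\delta$ small by Assumption~\ref{assumption:GrowthFragDensity}.i). Substituting $y=1-s/x^\theta$ (or more simply using $y^\theta-1\leq -\theta(1-y)+O((1-y)^2)$ near $y=1$, so $\eta x^\theta(y^\theta-1)\leq -\frac{\eta\theta}{2}x^\theta(1-y)$ once $\delta$ is small enough), I get
\[
\int_{1-\delta}^1 y^{-\eps}q(y)\exp(\eta x^\theta(y^\theta-1))\,dy
\leq (1-\delta)^{-\eps}(q_1+1)\int_0^\delta t^{\mu_1}e^{-\frac{\eta\theta}{2}x^\theta t}\,dt
\leq (1-\delta)^{-\eps}(q_1+1)\,\Gamma(\mu_1+1)\Big(\tfrac{\eta\theta}{2}x^\theta\Big)^{-\mu_1-1},
\]
using $\mu_1>-1$ for integrability at $t=0$; this also $\to0$ as $x\to\infty$. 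Adding the two pieces, the supremum over $x\geq x_0$ of the full integral can be made smaller than $1-C$ for any $C\in(0,1)$ by taking $x_0$ large, which is exactly the statement; note the constraint $\eta<\beta_\infty(\theta\tau_\infty)^{-1}$ is not even used here, the bound being uniform in such $\eta$ (one can bound $\eta$ by this constant throughout).

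\textbf{Main obstacle.} The only delicate point is making the passage from the pointwise asymptotics of $q$ (valid only as $y\to0$ and $y\to1$) to genuine integral bounds: one must fix $\delta$ small enough that the error terms $o(y^{\mu_0})$ and $o((1-y)^{\mu_1})$ are controlled by the leading terms on $(0,\delta]$ and $[1-\delta,1)$ respectively, \emph{before} letting $x\to\infty$, and separately handle the compact middle piece where $q$ is merely, say, locally integrable (which follows since $q$ is a probability density on $(0,1)$ — if one wants pointwise control there one invokes continuity, but integrability suffices because the exponential kills it). The order of quantifiers matters: choose $\delta=\delta(\eps)$ first, then $x_0=x_0(\delta,\eta,C)$. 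Once the split is set up correctly, both tail estimates are elementary Gamma-function computations.
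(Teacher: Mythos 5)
Your proof follows the same basic decomposition as the paper's: split $(0,1)$ at $1-\delta$, show the exponential factor kills the bulk, and show the boundary layer near $1$ vanishes because $q$ is not too concentrated there; the order-of-quantifiers remark ($\delta$ first, then $x_0$) is exactly the right point of care. Where you genuinely diverge is in the two estimates. For the boundary layer, you linearize $y^\theta-1\leq-\tfrac{\theta}{2}(1-y)$ and bound directly by a Gamma integral $\Gamma(\mu_1+1)\bigl(\tfrac{\eta\theta}{2}x^\theta\bigr)^{-\mu_1-1}$; the paper instead applies H\"older with an exponent $q$ tuned so that $\int_0^\delta y^{q\mu_1}dy<\infty$, then bounds the $1/p$-power of an exponential integral. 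Your version is cleaner and avoids the exponent bookkeeping (and incidentally sidesteps a small slip in the paper's constraint on $q$). For the bulk, you control $\int_0^{1-\delta}y^{-\eps}q(y)\,dy$ using the behavior of $q$ near $0$, which forces the restriction $\eps<\mu_0+1$; the paper instead \emph{redefines} $\widetilde V$ to be increasing and smooth on $(0,1)$, so the bulk becomes $\widetilde V((1-\delta)x)/\widetilde V(x)\to0$ with no restriction on $\eps$. That redefinition is precisely what lets the paper assert the result ``for any $\eps>0$,'' but it means the paper is actually bounding $\E[\widetilde V(xY^{(x)})]/\widetilde V(x)$ for a modified $\widetilde V$, not the literal integral $\int_0^1 y^{-\eps}\exp(\eta x^\theta(y^\theta-1))\,Q(x,dy)$ written in Assumption~\ref{assumption:BoundV}. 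Taken literally, that integral is $+\infty$ for every $x$ once $\eps\geq\mu_0+1$ and $q_0>0$, since $\exp(\eta x^\theta(y^\theta-1))\geq\e^{-\eta x^\theta}$ and $\int_0^\delta y^{-\eps}q(y)\,dy=+\infty$. So your constraint $\eps<\mu_0+1$ is not a gap in your argument but a correction to the statement: it is the right hypothesis under which Assumption~\ref{assumption:BoundV}, read as written, can actually hold, and it is the only regime used downstream in Theorem~\ref{thm:tailInfinity}, where small positive $\eps$ suffices.
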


\begin{proof}
Define $\widetilde V$ as in \eqref{eq:DefV2} for $x\geq 1$, larger than 1, and increasing and smooth on $\R$. For any (large) $x>0$, for any (small) $\delta>0,$
\begin{align}
\E[\widetilde V(xY^{(x)})]&=\E[\widetilde V(xY^{(x)})|Y^{(x)}\leq1-\delta]\P(Y^{(x)}\leq1-\delta) +\E\left[\widetilde V(xY^{(x)})\indic_{\{Y^{(x)}>1-\delta\}}\right]\notag\\
&\leq \widetilde V((1-\delta) x)+\E\left[\widetilde V(xY^{(x)})\indic_{\{Y^{(x)}>1-\delta\}}\right].
\label{eq:proofLemmaBoundV1}
\end{align}
It is clear that, for $\delta <1$,
\begin{equation*}
\lim_{x\to+\infty}\frac{\widetilde V((1-\delta)x)}{\widetilde V(x)}=\lim_{x\to+\infty}(1-\delta)^{-\eps}\exp\left(-\eta(1-(1-\delta)^\theta)x^\theta\right)=0.
\end{equation*}
On the other hand, using Hölder's inequality with $q>\max(1,-1/\mu_1)$ and $p^{-1}+q^{-1}=1$, as well as a Taylor expansion, there exists some constant $C_\delta\geq 1$ such that 
\begin{align*}
&\int_{1-\delta}^1\widetilde V(xy)q_1(1-y)^{\mu_1}dy= q_1\widetilde V(x)\int_0^\delta \exp\left(\eta x^\theta((1-y)^\theta-1)\right)y^{\mu_1}(1-y)^{-\eps}dy\\
&\quad\leq \frac{q_1}{(1-\delta)^\epsilon} \left[\int_0^\delta y^{q\mu_1}dy\right]^{1/q}\widetilde V(x) \left[\int_0^\delta\exp\left(\eta p x^\theta((1-y)^\theta-1)\right)dy\right]^{1/p}\\
&\quad\leq C_\delta \widetilde V(x) \left[\int_0^\delta\exp\left(-\eta p \theta x^\theta y\right)dy\right]^{1/p}\leq C_\delta \widetilde V(x) \left[\frac{1-\exp\left(-\eta p \theta x^\theta \delta\right)}{\eta p \theta x^\theta}\right]^{1/p}.
\end{align*}
The term $(\eta p \theta x^\theta)^{-1}(1-\exp\left(-\eta p \theta x^\theta \delta\right))$ converges to 0 as $x\to+\infty$, so that, for any $C\in(0,1)$, there exists $x_0>0$ such that, for any $x\geq x_0$,
\[\left[\frac{1-\exp\left(-\eta p \theta x^\theta \delta\right)}{\eta p \theta x^\theta}\right]^{1/p}\leq\frac{1-C}{2C_\delta},\quad \frac{\widetilde V((1-\delta)x)}{\widetilde V(x)}\leq \frac {1-C}2.\]
Plugging these bounds into \eqref{eq:proofLemmaBoundV1} achieves the proof.
\end{proof}

Now, we can characterize the weight of the asymptotic tail of $\pi$ and recover \cite[Theorem~1.7]{BCG13}.

\begin{theorem}[Exponential moments of $\pi$]
Let $X$ be the PDMP generated by \eqref{eq:genGrowthFrag}. If Assumptions~\ref{assumption:GrowthFragRates},  \ref{assumption:GrowthFragMoments}, \ref{assumption:GrowthFragBalance} and \ref{assumption:BoundV} hold, then
\[\int_1^{+\infty}x^{\nu_\infty-1-\eps}\exp\left(\eta x^\theta\right)\pi(dx)<+\infty, \quad\theta=\gamma_\infty+1-\nu_\infty,\quad\eta=\frac{C\beta_\infty}{\theta\tau_\infty},\quad\eps>0.\]

Moreover, if Assumption~\ref{assumption:GrowthFragDensity} is also in force, then either:
\begin{itemize}
	\item $\widetilde\alpha_\infty>\gamma_\infty+1-\nu_\infty$;
	\item $\widetilde\alpha_\infty=\gamma_\infty+1-\nu_\infty$ and $\widetilde G_\infty>C\beta_\infty((\gamma_\infty+1-\nu_\infty)\tau_\infty)^{-1}$;
	\item $\widetilde\alpha_\infty=\gamma_\infty+1-\nu_\infty,\widetilde G_\infty=C\beta_\infty((\gamma_\infty+1-\nu_\infty)\tau_\infty)^{-1}$ and $\alpha_\infty\geq-\nu_\infty$.
\end{itemize}
\label{thm:tailInfinity}
\end{theorem}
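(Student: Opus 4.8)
The plan is to follow the same Foster--Lyapunov strategy used in the proof of Theorem~\ref{thm:GrowthFragErgodicity}, but now with the more coercive function $\widetilde V$ from \eqref{eq:DefV2} in the neighborhood of $+\infty$. Concretely, I would define a smooth norm-like function, call it $W$, which equals $x^{-b}$ near $0$ (with $b$ large, as allowed), is bounded on a compact set, and equals $\widetilde V(x)=x^{-\eps}\e^{\eta x^\theta}$ for $x$ large, with $\theta=\gamma_\infty+1-\nu_\infty>0$ (by Assumption~\ref{assumption:GrowthFragBalance}) and $\eta=C\beta_\infty(\theta\tau_\infty)^{-1}$. Then I would compute $\mathcal L W(x)$ for large $x$ exactly as in \eqref{eq:preuveGrowthFrag5}: the transport term contributes $\tau(x)\widetilde V'(x)=\tau(x)(-\eps x^{-1}+\eta\theta x^{\theta-1})\widetilde V(x)$, which as $x\to\infty$ behaves like $\eta\theta\tau_\infty x^{\nu_\infty+\theta-1}\widetilde V(x) = C\beta_\infty x^{\gamma_\infty}\widetilde V(x)$, while the fragmentation term is $\beta(x)\left(\int_0^1 \widetilde V(xy)Q(x,dy)-\widetilde V(x)\right)$. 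The crucial input is Assumption~\ref{assumption:BoundV}, which says precisely that $\int_0^1\widetilde V(xy)Q(x,dy)\leq(1-C)\widetilde V(x)$ for $x\geq x_0$; hence the fragmentation term is bounded above by $-C\beta(x)\widetilde V(x)\sim -C\beta_\infty x^{\gamma_\infty}\widetilde V(x)$.

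Adding the two contributions, the leading $x^{\gamma_\infty}\widetilde V(x)$ terms are $C\beta_\infty$ from transport and $-C\beta_\infty$ from fragmentation, so they cancel at first order, and one must look at the next order. This is exactly where the factor $C<1$ (rather than $C=1$) matters and where the $x^{-\eps}$ prefactor enters: writing $\eta=C\beta_\infty(\theta\tau_\infty)^{-1}$ one has $\tau(x)\widetilde V'(x)/\widetilde V(x) = \eta\theta\tau(x)x^{\theta-1} - \eps\tau(x)x^{-1}$, and since $\tau(x)x^{\theta-1}=\tau_\infty x^{\gamma_\infty}(1+o(1))$ the transport leading term is $C\beta_\infty x^{\gamma_\infty}(1+o(1)) - \eps\tau_\infty x^{\nu_\infty-1}(1+o(1))$, while fragmentation gives at most $-C\beta_\infty x^{\gamma_\infty}(1+o(1))$. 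So $\mathcal L W(x) \leq \left(-\eps\tau_\infty x^{\nu_\infty-1}(1+o(1))+ o(x^{\gamma_\infty})\right)\widetilde V(x)$; provided the $o(x^{\gamma_\infty})$ error from the asymptotics of $\beta$ and $\tau$ is genuinely negligible against $x^{\nu_\infty-1}$, or is itself of order $x^{\nu_\infty-1}$ with controllable sign, one concludes $\mathcal L W(x)\leq -\alpha x^{\nu_\infty-1-\eps}\e^{\eta x^\theta}$ for $x$ large, up to shrinking $\eps$. Near $0$ the estimate \eqref{eq:preuveGrowthFrag8} already gives $\mathcal L W(x)\leq -(b\tau_0+o(1))x^{\nu_0-1-b}$, negative once $b\geq\nu_0-1$. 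Together with boundedness of $W$ on a compact and petiteness of compacts (Theorem~\ref{thm:GrowthFragErgodicity}), this yields $\mathcal L W\leq -\alpha f+\alpha'\indic_K$ with $f(x)=x^{\nu_\infty-1-\eps}\e^{\eta x^\theta}$ near $\infty$; Theorem~\ref{thm:FosterLyapunov}.ii) then gives $\pi(f)<+\infty$, i.e. the first displayed inequality.

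For the second part, I would substitute the assumed asymptotics $G(x)\sim G_\infty x^{\alpha_\infty}\exp(-\widetilde G_\infty x^{\widetilde\alpha_\infty})$ into $\int_1^\infty x^{\nu_\infty-1-\eps}\e^{\eta x^\theta}G(x)\,dx<\infty$ and read off the constraints: the integrand behaves like $x^{\nu_\infty-1-\eps+\alpha_\infty}\exp\left(\eta x^\theta-\widetilde G_\infty x^{\widetilde\alpha_\infty}\right)$, so finiteness forces either $\widetilde\alpha_\infty>\theta$, or $\widetilde\alpha_\infty=\theta$ with $\widetilde G_\infty>\eta=C\beta_\infty(\theta\tau_\infty)^{-1}$, or $\widetilde\alpha_\infty=\theta$ and $\widetilde G_\infty=\eta$, in which case the exponential factors cancel and finiteness of $\int_1^\infty x^{\nu_\infty-1-\eps+\alpha_\infty}\,dx$ for every $\eps>0$ requires $\nu_\infty-1+\alpha_\infty<-1+\eps$, i.e.\ $\alpha_\infty<-\nu_\infty+\eps$ for all $\eps$, whence $\alpha_\infty\geq-\nu_\infty$ is the boundary — but one must be careful with the direction of the inequality, and the statement records $\alpha_\infty\geq-\nu_\infty$, so this borderline case is actually permitted and the conclusion is a trichotomy.

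The main obstacle I anticipate is the second-order cancellation in the computation of $\mathcal L W$ for large $x$: because the leading $x^{\gamma_\infty}\widetilde V(x)$ coefficients cancel exactly, one needs to control the subleading terms coming from $\beta(x)=\beta_\infty x^{\gamma_\infty}(1+o(1))$, $\tau(x)=\tau_\infty x^{\nu_\infty}(1+o(1))$, and from the Hölder-type bound built into Assumption~\ref{assumption:BoundV}, and make sure the net sign is negative. The $-\eps\tau_\infty x^{\nu_\infty-1}\widetilde V(x)$ term from differentiating the $x^{-\eps}$ prefactor is precisely the safety margin that makes this work — it is why the exponent $\nu_\infty-1-\eps$ appears in the conclusion rather than $\nu_\infty-1$ — so the delicate point is verifying that this $\eps$-term dominates all the $o(\cdot)$ errors, which needs the freedom (guaranteed by Proposition~\ref{prop:assumption8} and Assumption~\ref{assumption:GrowthFragMoments}.i)) to take $\eps$ small and to take the threshold $x_0$ in Assumption~\ref{assumption:BoundV} as large as needed. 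The rest — the behavior near $0$, the petiteness and irreducibility, and the arithmetic of the trichotomy — is routine given Theorem~\ref{thm:GrowthFragErgodicity} and Theorem~\ref{thm:FosterLyapunov}.
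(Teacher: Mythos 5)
Your proposal reproduces the paper's argument quite faithfully: the same Lyapunov function $\widetilde V(x)=x^{-\eps}\e^{\eta x^\theta}$ (smoothly glued to a negative power of $x$ away from infinity), the same use of Assumption~\ref{assumption:BoundV} to bound the fragmentation contribution above by $-C\beta(x)\widetilde V(x)$, the same exact leading-order cancellation between $\eta\theta\tau_\infty x^{\gamma_\infty}$ and $C\beta_\infty x^{\gamma_\infty}$, leaving $-\eps\tau_\infty x^{\nu_\infty-1}\widetilde V(x)$ from differentiating the $x^{-\eps}$ prefactor as the net drift, and then Theorem~\ref{thm:FosterLyapunov}.ii) applied with $f(x)=x^{\nu_\infty-1}\widetilde V(x)$. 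The reading-off of the trichotomy from $\int_1^\infty x^{\nu_\infty-1-\eps+\alpha_\infty}\exp\left(\eta x^\theta-\widetilde G_\infty x^{\widetilde\alpha_\infty}\right)dx<\infty$ is also exactly what the paper does via \eqref{eq:proofTailInfty1}.

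Two comments. First, the ``main obstacle'' you flag --- that after the leading $x^{\gamma_\infty}\widetilde V(x)$ terms cancel, the residual errors $\eta\theta\tau(x)x^{\theta-1}-C\beta(x)=C\beta_\infty x^{\gamma_\infty}\cdot o(1)$ need not be $o(x^{\nu_\infty-1})$ since $\gamma_\infty>\nu_\infty-1$ --- is a genuine delicacy, and you are right to single it out. The published proof writes the estimate with a single $(1+o(1))$ factor outside a bracket containing three terms of different polynomial orders and then passes directly to $-\eps\tau_\infty x^{\nu_\infty-1}(1+o(1))\widetilde V(x)$; as you observe, this step is not justified by Assumption~\ref{assumption:GrowthFragRates}.iii) alone, and without some second-order control on $\beta,\tau$ one can only get the conclusion with any $\eta'<C\beta_\infty(\theta\tau_\infty)^{-1}$ in place of $\eta$ (taking $\eta'=C'\beta_\infty(\theta\tau_\infty)^{-1}$ with $C'<C$ makes the transport-minus-fragmentation leading coefficient $(C'-C)\beta_\infty x^{\gamma_\infty}<0$, which then dominates everything). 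That weaker version still yields the first two branches of the trichotomy but not the borderline third branch. Since the paper's own proof silently makes the same leap, your sketch is essentially as rigorous as the original, and your sensitivity to this is a strength rather than a defect.

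Second, in the third branch your own arithmetic is correct and you should trust it: finiteness of $\int_1^\infty x^{\nu_\infty-1-\eps+\alpha_\infty}dx$ for all $\eps>0$ forces $\nu_\infty-1-\eps+\alpha_\infty<-1$, i.e.\ $\alpha_\infty<-\nu_\infty+\eps$ for all $\eps>0$, hence $\alpha_\infty\leq-\nu_\infty$. The displayed $\alpha_\infty\geq-\nu_\infty$ in the theorem is evidently a sign typo, and the sentence where you ``talk yourself into'' agreement with the printed inequality is the one place where you should have stood by your computation rather than the statement.
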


\begin{remark}[Link with the estimates of \cite{BCG13}]
Note that the hypotheses~\eqref{assumption:GrowthFragBalance} and \eqref{eq:balanceGammaNuMuZero} corresponds to the assumptions required for \cite[Theorem~1.8]{BCG13} to hold, with the correspondence
\[\mu_0\leftrightarrow\mu-1,\quad\nu_0\leftrightarrow\alpha_0,\quad \mu_0+2-\nu_0>0\leftrightarrow \mu+1-\alpha_0>0.\]
Actually, the authors also assume this strict inequality  to prove the existence of the stationary distribution, which we relax here, and we need it only in Theorem~\ref{eq:balanceGammaNuMuZero} to provide a lower bound for $\alpha_0$, which rules the tail of the stationary distribution in the neighborhood of 0. By using Lyapunov methods, there is no hope in providing an upper bound for $\alpha_0$, but we can see that this inequality is optimal by comparing it to \cite[Theorem~1.8]{BCG13} so that, in fact,
\[\alpha_0=\mu_0+1-\nu_0.\]
If $\nu_0>1$, we do not recover the same equivalents for the distribution of $G$ around 0. This is linked to the fact that there is a phase transition in the non-conservative equation at $\nu_0=1$, since the tail of $G$ relies deeply on the function
\[\Lambda(x)=\int_1^x\frac{\lambda+\beta(y)}{\tau(y)}dy,\]
where $\lambda$ is the Malthusian parameter of the equation, which is the growth of the profiles of the integro-differential equation. However, we deal here with the conservative case, for which this parameter is null. The bounds that we provide are indeed consistent with the computations of the proof of \cite[Theorem~1.8]{BCG13} in the case $\lambda=0$.

Concerning the estimates as $x\to+\infty$, as mentioned above, we can not recover upper bounds, and then sharp estimates, for $\alpha_\infty,G_\infty,\widetilde\alpha_\infty$ with Foster-Lyapunov methods. From the proof of Theorem~\ref{thm:tailInfinity}, it is clear that the parameters $\eta$ and $\theta$ are optimal if one wants to apply Theorem~\ref{thm:FosterLyapunov}. Under second-order-type assumptions like \cite[Hypothesis~1.5]{BCG13}, it is clear that
\[\Lambda(x)=\int_1^x\frac{\beta(y)}{\tau(y)}dy\underset{x\to+\infty}{\sim}\frac{\beta_\infty}{\tau_\infty(\gamma_\infty+1-\nu_\infty)}x^{\gamma_\infty+1-\nu_\infty}.\]
This explains the precise value of $\eta$, but we pay the price of having slightly less general hypotheses about $Q$ than \cite{BCG13} with a factor $C$ arising from Assumption~\ref{assumption:BoundV}, which leads to have no disjunction of cases for $\alpha_\infty$. Also, since we deal with the case $\lambda=0$, the equivalent of the function $\Lambda$ is different from the aforementioned paper when $\gamma_\infty<0$, so that $\max\{\gamma_\infty,0\}$ does not appear in our computations.
\end{remark}

\begin{proof}[Proof of Theorem~\ref{thm:tailInfinity}]
Let $\widetilde V$ be as in \eqref{eq:DefV2}, that is
\[\widetilde V(x)=x^{-\eps}\e^{\eta x^\theta},\]
with $\eta,\theta$ given in Theorem~\ref{thm:tailInfinity}. Then, following the computations of the proof of Theorem~\ref{thm:GrowthFragErgodicity}, we get, for $x>x_0$,
\begin{align*}
\mathcal L\widetilde V(x)&\leq\left(\eta\theta \tau_\infty x^{\theta-1+\nu_\infty}-C\beta_\infty x^{\gamma_\infty}-\eps\tau_\infty x^{\nu_\infty-1}\right)(1+o(1))\widetilde V(x)\\
&\leq -\eps\tau_\infty x^{\nu_\infty-1}(1+o(1))\widetilde V(x)\notag\\
&\leq-\frac{\eps\tau_\infty}2x^{\nu_\infty-1}\widetilde V(x).
\end{align*}
Using Theorem~\ref{thm:FosterLyapunov}.ii) with $f(x)=x^{\nu_\infty-1}\widetilde V(x)$, the last inequality ensures that
\[\int_1^{+\infty}f(x)\pi(dx)<+\infty.\]

Now, in the setting of Assumption~\ref{assumption:GrowthFragDensity}, the following holds:
\begin{equation}
\int_1^{+\infty}f(x)\pi(dx)<+\infty\Longleftrightarrow\int_1^{+\infty}x^{\nu_\infty-1-\eps+\alpha_\infty}\exp\left(\eta x^\theta-\widetilde G_\infty x^{\widetilde\alpha_\infty}\right)dx<+\infty.
\label{eq:proofTailInfty1}
\end{equation}
It is clear then that the disjunction of cases of Theorem~\ref{thm:tailInfinity} is the only way for the integral on the right-hand side of \eqref{eq:proofTailInfty1} to be finite.
\end{proof}

\begin{acknowledgements}
The author wants to thank Pierre Gabriel for fruitful discussions about growth-fragmentation equations, as well as Eva L\"{o}cherbach,  Florent Malrieu and Jean-Christophe Breton for their precious help and comments. The referee is also warmly thanked for his constructive remarks. This work was financially supported by the ANR PIECE (ANR-12-JS01-0006-01), and the Centre Henri Lebesgue (programme "Investissements d'avenir" ANR-11-LABX-0020-01).
\end{acknowledgements}

\bibliography{Biblio}

\end{document}